\documentclass[a4paper,11pt]{article}

\usepackage{a4wide}
\usepackage[english]{babel}
\usepackage{amsfonts}
\usepackage{amsmath}
\usepackage{graphicx}
\usepackage[colorlinks,bookmarks=true]{hyperref}
\usepackage{amsthm}
\usepackage{enumitem}

\newtheorem{lemma}{Lemma}[section]

\newtheorem{theorem}{Theorem}[section]
\newtheorem{proposition}{Proposition}[section]

\newtheorem{corollary}{Corollary}[section]

\newcommand{\E}{\mathbb{E}}

\def\be{\begin{eqnarray}}
\def\ee{\end{eqnarray}}
\def\b*{\begin{eqnarray*}}
\def\eq*{\end{eqnarray*}}
\def\beq{\begin{equation}}
\def\eeq{\end{equation}}

\title{Some explicit formulas for the Brownian bridge, Brownian meander and Bessel process under uniform sampling}

\author{Mathieu Rosenbaum and Marc Yor\\$~~$\\
LPMA, University Pierre et Marie Curie (Paris 6)}

\begin{document}

\maketitle

\begin{abstract}
\noindent We show that simple explicit formulas can be obtained for several relevant quantities related to the laws of the uniformly sampled Brownian bridge, Brownian meander and three dimensional Bessel process. To prove such results, we use the distribution of a triplet of random variables associated to the pseudo-Brownian bridge given in \cite{rosenbaum2013law}, together with various relationships between the laws of these four processes. Finally, we consider the variable $B_{UT_1}/\sqrt{T_1}$, where $B$ is a Brownian motion, $T_1$ its first hitting time of level one and $U$ a uniform random variable independent of $B$. This variable is shown to be centered in \cite{elie2013expectation,rosenbaum2013law}. The results obtained here enable us to revisit this intriguing property through an enlargement of filtration formula.  
\end{abstract}

\noindent \textbf{Keywords:} Brownian motion, Brownian bridge, Brownian meander, pseudo-Brownian bridge, Bessel process, uniform sampling, local times, hitting times, enlargement of filtration.

\section{Introduction}\label{intro}

Let $(B_t,~t\geq 0)$ be a standard Brownian motion, $T_1$ its first hitting time of level one, and
$U$ a uniform random variable on $[0,1]$, independent of $B$. In \cite{elie2013expectation}, it is first shown that the random variable $\alpha$ defined by
\begin{equation}\label{variable}
\alpha=\frac{B_{UT_1}}{\sqrt{T_1}}
\end{equation}
is centered. Intrigued by this property, we determined the distribution of this variable, which is expressed in \cite{rosenbaum2013law} under the following form, where $\underset{\mathcal{L}}{=}$ denotes equality in law:
\begin{equation}\label{law}
\alpha\underset{\mathcal{L}}{=}\Lambda L_1-\frac{1}{2}|B_1|,\end{equation}
with $L_t$ the local time at point $0$ of $B$ at time $t$ and $\Lambda$ a uniform random variable on $[0,1]$, independent of $(|B_1|,L_1)$. The centering property is easily recovered from \eqref{law} since
$$\E[\Lambda L_1-\frac{1}{2}|B_1|]=\frac{1}{2}\E[L_1-|B_1|]=0.$$

\noindent In fact, in \cite{rosenbaum2013law}, a preliminary to the proof of \eqref{law} is to obtain the law of a triplet of random variables defined in terms of the pseudo-Brownian bridge introduced in \cite{biane1987processus}, see Section \ref{sec_prelim} below. In this paper, we show that the law of this triplet enables us to derive several unexpected simple formulas for various quantities related to some very classical Brownian type processes, namely the Brownian bridge, the Brownian meander and the three dimensional Bessel process. More precisely, we focus on distributional properties of these processes when sampled with an independent uniform random variable. Thus, this work can be viewed as a modest complement to the seminal paper by Pitman, see \cite{pitman1999brownian}, where the laws of these processes when sampled with (several) independent uniform random variables are already studied.\\

\noindent The paper is organized as follows. In Section \ref{sec_prelim}, we give some preliminary results related to the law of $(|B_1|,L_1)$. Indeed, they play an important role in the proofs. Distributional properties for the Brownian bridge are established in Section \ref{sec_bri} whereas the Brownian meander and the three dimensional Bessel process are investigated in Section \ref{sec_mea}. Finally, in Section \ref{sec_filt}, we reinterpret the fact that $\alpha$ is centered through the lenses of an enlargement formula for the Brownian motion with the time $T_1$ due to Jeulin, see \cite{jeulin1979grossissement}. In particular we show that this centering property can be translated in terms of the expectation of the random variable 
$U/(R_UR_1^2)$, where $R$ is a three dimensional Bessel process and $U$ a uniform random variable on $[0,1]$ independent of $R$.

\section{Some preliminary results about the law of $(|B_1|,L_1)$}\label{sec_prelim}

Before dealing with the Brownian bridge, the Brownian meander and the three dimensional Bessel process, we give here some preliminary
results related to the distribution of the couple $(|B_1|,L_1)$. These results will play an important role in the proofs of our main theorems.\\

\noindent It is well known that the law of $(B_1,L_1)$ admits a density on $\mathbb{R}\times\mathbb{R}^+$. Its value at point $(x,l)$ is given by
\begin{equation}\label{dens}
\frac{1}{\sqrt{2\pi}}(|x|+l)\text{exp}\big(-\frac{(l+|x|)^2}{2}\big).
\end{equation}
For $l\geq 0$, we set
$$H(l)=\text{e}^{l^2/2}\int_l^{+\infty}dx\text{e}^{-x^2/2}.$$
The following consequences of \eqref{dens} shall be useful in the sequel.

\begin{proposition}\label{prop_prelim}
Let $l\geq 0$. We have the double identity
\begin{equation}\label{double}
\E[L_1||B_1|=l]=\E[|B_1||L_1=l]=H(l).
\end{equation}
Furthermore, one has
\begin{equation}\label{double2}
H(l)=l\E[\frac{1}{N^2+l^2}],\end{equation}
where $N$ denotes a standard Gaussian random variable.
\end{proposition}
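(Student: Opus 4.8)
The plan is to work directly from the joint density \eqref{dens}. Since its value at $(x,l)$ depends on $x$ only through $|x|$, folding over the sign of $x$ shows that $(|B_1|,L_1)$ admits the density $\sqrt{2/\pi}\,(a+b)\exp(-(a+b)^2/2)$ on $\mathbb{R}^+\times\mathbb{R}^+$, where $a$ is the value of $|B_1|$ and $b$ that of $L_1$. This expression is symmetric under exchanging its two arguments, so the pair $(|B_1|,L_1)$ is exchangeable; equivalently, $(|B_1|,L_1)$ and $(L_1,|B_1|)$ share the same law. The first equality in \eqref{double} is then immediate, being merely a restatement of this exchangeability.

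To identify the common value with $H$, I would compute $\E[L_1\mid |B_1|=l]$ explicitly. Integrating the folded density over the local-time variable, via the substitution $u=a+b$, first recovers the half-normal marginal $\sqrt{2/\pi}\,e^{-a^2/2}$ of $|B_1|$. Dividing by this marginal gives $\E[L_1\mid |B_1|=l]=e^{l^2/2}\int_0^\infty y(l+y)e^{-(l+y)^2/2}\,dy$. Setting $u=l+y$ and integrating $\int_l^\infty u^2 e^{-u^2/2}\,du$ by parts produces a boundary term $l\,e^{-l^2/2}$ that cancels exactly against the contribution of $-l\int_l^\infty u\,e^{-u^2/2}\,du=-l\,e^{-l^2/2}$; what survives is precisely $e^{l^2/2}\int_l^\infty e^{-x^2/2}\,dx=H(l)$. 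Together with the exchangeability, this proves \eqref{double}.

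For \eqref{double2}, I would use the elementary representation $1/(N^2+l^2)=\int_0^\infty e^{-(N^2+l^2)t}\,dt$ together with Fubini to write $l\,\E[(N^2+l^2)^{-1}]=l\int_0^\infty e^{-l^2 t}\,\E[e^{-tN^2}]\,dt$. The Gaussian Laplace transform $\E[e^{-tN^2}]=(1+2t)^{-1/2}$ then reduces this to $l\int_0^\infty e^{-l^2 t}(1+2t)^{-1/2}\,dt$. The substitution $1+2t=s^2$ turns it into $l\,e^{l^2/2}\int_1^\infty e^{-l^2 s^2/2}\,ds$, and a final rescaling $x=ls$ recovers $e^{l^2/2}\int_l^\infty e^{-x^2/2}\,dx=H(l)$, as claimed.

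I expect no serious conceptual obstacle here: the one genuine idea is recognizing the symmetry of the folded density, which renders the first equality in \eqref{double} automatic rather than computational. The remaining work is bookkeeping with Gaussian integrals, where the only delicate points are performing the integration by parts in the right direction for \eqref{double} and selecting the integral representation of $1/(N^2+l^2)$ that linearizes the dependence on $l$ for \eqref{double2}. As an alternative route to \eqref{double2}, one could instead verify that both sides satisfy the linear ordinary differential equation $f'(l)=l f(l)-1$ with the same behaviour as $l\to\infty$, but the Laplace-transform computation above seems the most transparent.
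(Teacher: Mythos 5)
Your argument is correct throughout, and two of its three pieces coincide with the paper's: the first equality in \eqref{double} is obtained in both cases from the symmetry of the (folded) density of $(|B_1|,L_1)$, and your proof of \eqref{double2} --- integral representation of $1/(N^2+l^2)$, the Gaussian Laplace transform $\E[e^{-tN^2}]=(1+2t)^{-1/2}$, then a change of variables matching $H(l)$ --- is essentially the computation in the paper, up to a reparametrisation of the integration variable. Where you genuinely diverge is in identifying the common conditional expectation with $H(l)$. You compute $\E[L_1\mid |B_1|=l]$ head-on from \eqref{dens}: recover the half-normal marginal, divide, substitute $u=l+y$, and integrate by parts so that the boundary term cancels; I checked the cancellation and the surviving term is indeed $e^{l^2/2}\int_l^\infty e^{-x^2/2}\,dx$. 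The paper explicitly declines this route (``at the cost of some integrations'') and instead derives the identity $\E[\phi(L_1)|B_1|]=\E[\int_0^{L_1}\phi(x)\,dx]$ from the balayage formula, combines it with L\'evy's identity $L_1\overset{\mathcal{L}}{=}|B_1|$, and reads off $\E[|B_1|\mid L_1=l]$ by varying $\phi$. Your version is more elementary and entirely self-contained given \eqref{dens}; the paper's buys a structural identity that does not depend on knowing the explicit joint density and is the kind of relation that generalises (it is a statement about the martingale $\phi(L_t)|B_t|-\int_0^t\phi(L_s)\,dL_s$ rather than about one Gaussian integral). Either is a complete proof; your ODE remark ($f'(l)=lf(l)-1$) is also a valid sanity check for \eqref{double2}, though as stated it would still require verifying that the right-hand side satisfies the same equation.
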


\begin{proof}
We start with the proof of \eqref{double}. Of course, it can be deduced from \eqref{dens} at the cost of some integrations. We
prefer the following arguments. First, the equality on the left hand side of \eqref{double} stems from the symmetry of the law of 
$(|B_1|,L_1)$, which is obvious from \eqref{dens}. Thus, we now have to show the second equality in \eqref{double}. This easily follows from the identity
\begin{equation}\label{balayage}
\E[\phi(L_1)|B_1|]=\E[\int_0^{L_1}dx\phi(x)],
\end{equation}
which is valid for any bounded measurable function $\phi$. Indeed, assuming \eqref{balayage} for a moment, using the fact that 
$$L_1\underset{\mathcal{L}}{=}|B_1|,$$
we may write \eqref{balayage} as
$$\int_0^{+\infty}dl\text{e}^{-l^2/2}\phi(l)\E[|B_1||L_1=l]=\int_0^{+\infty}dl\phi(l)\int_l^{+\infty}dx\text{e}^{-x^2/2}.$$
Hence, since this is true for every bounded measurable function $\phi$, we get
$$\text{e}^{-l^2/2}\E[|B_1||L_1=l]=\int_l^{+\infty}dx\text{e}^{-x^2/2},$$
which is the desired result for \eqref{double}.\\

\noindent It remains to prove \eqref{balayage} for a generic bounded measurable function $\phi$. Remark that the formula
$$\phi(L_t)|B_t|=\int_0^tdB_s\phi(L_s)\text{sign}(B_s)+\int_0^tdL_s\phi(L_s)$$
is a very particular case of the balayage formula, see \cite{revuz1999continuous}, page 261. It now suffices to take expectation on both sides of this last equality to obtain \eqref{balayage}.\\

\noindent We now give the proof of the second part of Proposition \ref{prop_prelim}. First, note that
$$\E[\frac{1}{N^2+l^2}]=\int_0^{+\infty}dv\text{e}^{-vl^2}\E[\text{e}^{-vN^2}].$$
Using the Laplace transform of $N^2$, we obtain 
\begin{equation}\label{form}
\E[\frac{1}{N^2+l^2}]=\int_0^{+\infty}dv\frac{\text{e}^{-vl^2/2}}{2\sqrt{1+v}}.
\end{equation}
Then remark that thanks to the change of variable $x^2=(1+v)l^2$, we get
$$H(l)=\text{e}^{l^2/2}\int_l^{+\infty}dx\text{e}^{-x^2/2}=\int_0^{+\infty}dv\frac{l}{2\sqrt{1+v}}\text{e}^{-vl^2/2}.$$
This together with $\eqref{form}$ gives the second part of Proposition \ref{prop_prelim}.
\end{proof}

\section{The Brownian bridge under uniform sampling}\label{sec_bri}

Before giving our theorem on the uniformly sampled Brownian bridge, we recall a result related to the pseudo-Brownian bridge established in \cite{rosenbaum2013law}, and which is the key to most of the proofs in this paper. The pseudo-Brownian bridge was introduced in \cite{biane1987processus} and is defined by
$$(\frac{B_{u\tau_1}}{\sqrt{\tau_1}},~u\leq 1),$$ with
$(\tau_l,~l>0)$ the inverse local time process:
$$\tau_l=\text{inf}\{t,~L_t>l\}.$$
This pseudo-Brownian bridge is equal to $0$ at time $0$ and time $1$ and has the same quadratic variation as the Brownian motion. Thus, it shares some similarities with the Brownian bridge, which explains its name. Let $U$ be a uniform random variable on $[0,1]$ independent of $B$. The following theorem is proved in \cite{rosenbaum2013law}.
\begin{theorem}\label{theo1}
There is the identity in law
$$(\frac{B_{U\tau_1}}{\sqrt{\tau_1}},\frac{1}{\sqrt{\tau_1}},L_{U\tau_1})\underset{\mathcal{L}}{=}(\frac{1}{2}B_1,L_1,\Lambda),$$
with $\Lambda$ a uniform random variable on $[0,1]$, independent of $(B_1,L_1)$.
\end{theorem}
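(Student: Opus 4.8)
The plan is to turn the uniform sampling into an occupation integral and then to exploit Brownian scaling together with It\^o's excursion theory. First I would average over $U$: conditionally on the path, for bounded measurable $f,g,h$,
$$\E\Big[f\big(\tfrac{B_{U\tau_1}}{\sqrt{\tau_1}}\big)\,g\big(\tfrac{1}{\sqrt{\tau_1}}\big)\,h(L_{U\tau_1})\Big]=\E\Big[\tfrac{1}{\tau_1}\int_0^{\tau_1}f\big(\tfrac{B_t}{\sqrt{\tau_1}}\big)\,g\big(\tfrac{1}{\sqrt{\tau_1}}\big)\,h(L_t)\,dt\Big],$$
so that the problem becomes the analysis of the normalized occupation measure of $(B_t,L_t)$ on $[0,\tau_1]$. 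The scaling identity $L_{u\tau_1}/\sqrt{\tau_1}=\ell_u$, where $\ell$ is the local time of the pseudo-Brownian bridge $\beta_u=B_{u\tau_1}/\sqrt{\tau_1}$, shows that the three target coordinates can be rewritten as $(\beta_U,\ell_1,\ell_U/\ell_1)$, with the crucial relation $\ell_1=1/\sqrt{\tau_1}$ coming from $L_{\tau_1}=1$.

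I would isolate the third coordinate first, as it is the cleanest. Since $t\mapsto L_t$ is continuous, non-decreasing and the zero set has Lebesgue measure zero, one has a.s. $\{L_{U\tau_1}\le l\}=\{U\tau_1<\tau_l\}$, whence
$$\PP(L_{U\tau_1}\le l)=\E\big[\tau_l/\tau_1\big].$$
To evaluate this I would use that $(\tau_l)$ is a stable$(1/2)$ subordinator, so that $\tau_l$ and $\tau_1-\tau_l\underset{\mathcal{L}}{=}\tau_{1-l}$ are independent with $\tau_l\underset{\mathcal{L}}{=}l^2/N_1^2$ and $\tau_1-\tau_l\underset{\mathcal{L}}{=}(1-l)^2/N_2^2$ for independent standard Gaussians $N_1,N_2$. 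Writing $\tau_l/\tau_1$ in terms of the arcsine variable $N_2^2/(N_1^2+N_2^2)$ reduces the expectation to a trigonometric integral that evaluates exactly to $l$. This proves that $L_{U\tau_1}$ is uniform on $[0,1]$, matching $\Lambda$.

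For the first two coordinates and the independence of $\Lambda$, I would condition on $\tau_1=c$ (equivalently on $\ell_1=1/\sqrt{c}$) to freeze the scale, and then decompose the occupation integral over the excursions of $B$ away from $0$ before $\tau_1$ using the It\^o measure (see \cite{revuz1999continuous,biane1987processus}). The point $U\tau_1$ falls in the excursion straddling level $L_{U\tau_1}$, uniformly within that excursion interval; conditionally on the excursion lengths it selects an excursion by length-biasing, and the value $B_{U\tau_1}/\sqrt{\tau_1}$ is then the value of a (length-biased, signed) normalized excursion read at a uniform interior time, rescaled by $\sqrt{\zeta/\tau_1}$. The remaining task is to identify the joint law of this value and of $1/\sqrt{\tau_1}=\ell_1$ with that of $(\tfrac12 B_1,L_1)$; here I would appeal to the explicit density \eqref{dens} of $(|B_1|,L_1)$, checking that the transform produced by the excursion computation coincides with the one read off from \eqref{dens}, the factor $\tfrac12$ emerging from the excursion-value integral.

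The main obstacle is precisely this last step: all three coordinates are coupled through the single random scale $\tau_1$, and the It\^o measure has infinite total mass, so one cannot naively size-bias the straddling excursion. Conditioning on $\tau_1$ (equivalently, transferring the whole computation to the stable$(1/2)$ subordinator and using its self-similarity) is what decouples the scale and turns the size-biasing into tractable Beta/arcsine integrals; the factorization of these integrals is also what yields the independence of the uniform third coordinate from $(\tfrac12 B_1,L_1)$. I expect the verification that the excursion-value transform matches \eqref{dens}, rather than the uniformity of $L_{U\tau_1}$, to be the delicate part of the argument.
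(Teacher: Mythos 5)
First, note that the paper does not actually prove Theorem \ref{theo1}: it is imported verbatim from \cite{rosenbaum2013law}, so there is no in-paper argument to compare yours against; I therefore assess your proposal on its own terms. The parts you carry out are correct. The reduction of the uniform sampling to the normalized occupation integral is the right first move, the identity $\PP(L_{U\tau_1}\le l)=\E[\tau_l/\tau_1]$ is valid, and your stable-subordinator computation does work: writing $\tau_l/\tau_1$ as $aC^2/(aC^2+b)$ with $C$ Cauchy, $a=l^2$, $b=(1-l)^2$, the integral evaluates to $\sqrt a/(\sqrt a+\sqrt b)=l$. So you have genuinely proved that $L_{U\tau_1}$ is uniform on $[0,1]$.

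That, however, is only one marginal of the triplet, and the substance of the theorem --- that $(B_{U\tau_1}/\sqrt{\tau_1},\,1/\sqrt{\tau_1})\underset{\mathcal{L}}{=}(\tfrac12 B_1,L_1)$ \emph{and} that $L_{U\tau_1}$ is independent of this pair --- is announced rather than proved; you say so yourself, which is honest but does not close the gap. Two concrete points. (a) Your plan to ``condition on $\tau_1$ to freeze the scale'' and then length-bias the straddling excursion is the step that will not go through as stated: conditioning a Poisson point process of excursions on the value of the sum $\tau_1=\sum_i\zeta_i$ of its (infinitely many) lengths is not a tractable operation, precisely because $n$ has infinite mass. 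The workable route is the unconditioned master/compensation formula, keeping the factor $1/\tau_1$ inside and decomposing $\tau_1=\tau_{l-}+\zeta(\epsilon)+(\tau_1-\tau_l)$ for the excursion picked at local time $l$, with the pre- and post-parts independent of $\epsilon$ under the Palm formula. (b) The independence of the third coordinate is not a generic ``factorization of Beta/arcsine integrals'': it rests on the specific stability identity $\tau_{l-}+(\tau_1-\tau_l)\underset{\mathcal{L}}{=}l^2\tau_1'+(1-l)^2\tau_1''\underset{\mathcal{L}}{=}\tau_1$, whose law does not depend on $l$; without isolating this fact the $dl$-integral does not decouple from the rest. Finally, the identification of the resulting excursion-entrance-law integral with the density \eqref{dens} of $(B_1,L_1)$ (including where the factor $\tfrac12$ comes from) is exactly the computation the theorem lives on, and it is absent. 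As it stands the proposal is a plausible programme plus a complete proof of one coordinate, not a proof of the stated identity in law.
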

\noindent In other words, $L_{U\tau_1}$ is a uniform random variable on $[0,1]$, independent of the pair $$(\frac{B_{U\tau_1}}{\sqrt{\tau_1}},\frac{1}{\sqrt{\tau_1}}),$$ which is distributed as $(\frac{1}{2}B_1,L_1)$.\\

\noindent To deduce some properties of the Brownian bridge from Theorem \ref{theo1}, the idea is to use an absolute continuity relationship between the law of the pseudo-Brownian bridge and that of the Brownian bridge shown by Biane, Le Gall and Yor in \cite{biane1987processus}. More precisely, for $F$ a non negative measurable function on $\mathbb{C}([0,1],\mathbb{R})$, we have
\begin{equation}\label{bly}
\E\big[F\big(\frac{B_{u\tau_1}}{\sqrt{\tau_1}},~u\leq 1\big)\big]=\sqrt{\frac{2}{\pi}}\E\big[F\big(b(u),~u\leq 1\big)\frac{1}{\lambda_1^0}\big],\end{equation}
where $\big(b(u),~u\leq 1\big)$ denotes the Brownian bridge and $(\lambda_u^x,~u\leq 1,~x\in \mathbb{R})$ its family of local times. 
Let $U$ be again a uniform random variable on $[0,1]$ independent of $b$. The following theorem is easily deduced from Theorem \ref{theo1} together with Equation \ref{bly}.

\begin{theorem}\label{theo2}
For any non negative measurable functions $f$ and $g$, we have
\begin{equation}\label{pont}
\E\big[f\big(b(U),\lambda_1^0\big)g\big(\frac{\lambda_U^0}{\lambda_1^0}\big)\big]=\sqrt{\frac{\pi}{2}}\E\big[f(\frac{1}{2}B_1,L_1)L_1\big]\E[g(\Lambda)],\end{equation}
with $\Lambda$ a uniform random variable on $[0,1]$, independent of $(B_1,L_1)$.
\end{theorem}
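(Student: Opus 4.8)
The plan is to combine Theorem \ref{theo1} with the absolute-continuity relation \eqref{bly}, applying the latter not to the bare functional but to a functional carrying an extra factor $\lambda_1^0$, chosen precisely so that the Radon--Nikodym weight $1/\lambda_1^0$ in \eqref{bly} is cancelled. The independent uniform $U$ will be handled by applying \eqref{bly} for each fixed time $u$ and then averaging.

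First I would express the level-$0$ local time of the pseudo-Brownian bridge $\beta(u):=B_{u\tau_1}/\sqrt{\tau_1}$ in terms of the local time of $B$. Writing $\beta$ as the Brownian scaling $\beta(u)=\tau_1^{-1/2}B_{\tau_1 u}$ and using the induced pathwise scaling of occupation densities, one obtains, for the local time $\ell_u^0(\beta)$ of $\beta$ at level $0$ up to time $u$,
$$\ell_u^0(\beta)=\frac{1}{\sqrt{\tau_1}}L_{u\tau_1}.$$
Since $L_{\tau_1}=1$ by the very definition of $\tau_1$, this gives $\ell_1^0(\beta)=1/\sqrt{\tau_1}$, hence the ratio $\ell_U^0(\beta)/\ell_1^0(\beta)=L_{U\tau_1}$. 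Together with $\beta(U)=B_{U\tau_1}/\sqrt{\tau_1}$, this identifies the triplet of Theorem \ref{theo1}, so that
$$\big(\beta(U),\,\ell_1^0(\beta),\,\ell_U^0(\beta)/\ell_1^0(\beta)\big)\underset{\mathcal{L}}{=}\big(\tfrac12 B_1,\,L_1,\,\Lambda\big).$$

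The heart of the argument is the choice of functional. For fixed $u\in[0,1]$ I would apply \eqref{bly} to
$$F_u(\omega)=\ell_1^0(\omega)\,f\big(\omega(u),\ell_1^0(\omega)\big)\,g\big(\ell_u^0(\omega)/\ell_1^0(\omega)\big),$$
where $\ell^0$ is the level-$0$ local time of the generic path $\omega$. The prefactor $\ell_1^0(\omega)$ is inserted so that on the bridge side $F_u(b)\cdot(1/\lambda_1^0)=f(b(u),\lambda_1^0)g(\lambda_u^0/\lambda_1^0)$. Using the independence of $U$ from both $B$ and $b$, I would integrate \eqref{bly} over $u\in[0,1]$ (equivalently average over $U$ by Fubini), turning $F_u$ into $F_U$ on each side; the right-hand side then becomes exactly $\sqrt{2/\pi}\,\E[f(b(U),\lambda_1^0)g(\lambda_U^0/\lambda_1^0)]$, i.e. $\sqrt{2/\pi}$ times the left-hand side of \eqref{pont}.

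Finally I would evaluate the pseudo-bridge side. By the local-time identities above, $F_U(\beta)=\tau_1^{-1/2}f(B_{U\tau_1}/\sqrt{\tau_1},1/\sqrt{\tau_1})g(L_{U\tau_1})$, and applying the identity in law of Theorem \ref{theo1} converts the prefactor $\ell_1^0(\beta)=\tau_1^{-1/2}$ into the weight $L_1$, so that $\E[F_U(\beta)]=\E[L_1 f(\tfrac12 B_1,L_1)]\E[g(\Lambda)]$, where $\Lambda$ factors out by independence from $(B_1,L_1)$. Equating the two evaluations yields \eqref{pont}. The step needing the most care is the interface between \eqref{bly}, stated for deterministic path functionals, and the independent randomization by $U$: one must justify applying \eqref{bly} for each fixed $u$ and then integrating, together with the pathwise local-time scaling under the random time-change by $\tau_1$ used to identify $\ell_u^0(\beta)$.
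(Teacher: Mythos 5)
Your proposal is correct and follows exactly the route the paper indicates: the paper states that Theorem \ref{theo2} is ``easily deduced'' from Theorem \ref{theo1} together with Equation \eqref{bly} and omits the details, and your argument supplies precisely those details (inserting the factor $\ell_1^0$ to cancel the weight $1/\lambda_1^0$, identifying $\ell_1^0(\beta)=1/\sqrt{\tau_1}$ and $\ell_U^0(\beta)/\ell_1^0(\beta)=L_{U\tau_1}$ via scaling and $L_{\tau_1}=1$, and averaging over $U$ by Fubini). The computation checks out, including the placement of the constant $\sqrt{\pi/2}$.
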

\noindent Thus, $\lambda_U^0/\lambda_1^0$ is a uniform random variable on $[0,1]$, independent of the pair $\big(b(U),\lambda_1^0\big)$ which is distributed according to \eqref{pont} with $g=1$.\\

\noindent The following corollary of Theorem \ref{theo2} provides some surprisingly simple expressions for some densities and (conditional) expectations of quantities related to the Brownian bridge.

\begin{corollary}\label{cortheo2}
The following properties hold:\\

\noindent $\bullet~$The variable $\lambda_1^0$ admits a density on $\mathbb{R}^+$. Its value at point $l\geq 0$ is given by
$$l\emph{exp}(-l^2/2).$$ 
Hence, $\lambda_1^0$ has the same law as $\sqrt{2\mathcal{E}}$, with $\mathcal{E}$ an exponential random variable. Therefore,
$\lambda_1^0$ is Rayleigh distributed.\\

\noindent $\bullet~$The density of $b(U)$ at point $y$ given $\lambda_1^0=l$ is given by
$$\E[\lambda_1^y|\lambda_1^0=l]=(2|y|+l)\emph{exp}\big(-(2y^2+2|y|l)\big).$$
Consequently, there is the formula
$$\E\big[\frac{\lambda_1^y}{\lambda_1^0}\big]=\emph{exp}(-2y^2).$$

\noindent $\bullet~$The density of $b(U)$ at point $y$ is given by
$$\E[\lambda_1^y]=\int_{2|y|}^{+\infty}dz\emph{exp}(-z^2/2).$$
Thus, we have $b(U)\underset{\mathcal{L}}{=}\sqrt{2\mathcal{E}}(V/2)$, with $\mathcal{E}$ an exponential variable independent of $V$ which is uniformly distributed on $[-1,1]$.
\end{corollary}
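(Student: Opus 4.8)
The unifying tool is the occupation times formula for the Brownian bridge: since $b$ has the same quadratic variation as Brownian motion, $\langle b\rangle_u=u$, one has $\int_0^1 h(b(u))\,du=\int_{\mathbb R}h(y)\lambda_1^y\,dy$ for every nonnegative measurable $h$. Conditioning on the path and using that $U$ is uniform on $[0,1]$ and independent of $b$, this reads $\E[h(b(U))\mid b]=\int_{\mathbb R}h(y)\lambda_1^y\,dy$; hence the conditional density of $b(U)$ given $\lambda_1^0=l$ is $y\mapsto\E[\lambda_1^y\mid\lambda_1^0=l]$ and its unconditional density is $y\mapsto\E[\lambda_1^y]$. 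This is precisely the identification asserted in the last two bullets, and it reduces all three statements to computing $\E[\lambda_1^y\mid\lambda_1^0=l]$, $\E[\lambda_1^y/\lambda_1^0]$ and $\E[\lambda_1^y]$, each of which I would extract from \eqref{pont} by an appropriate choice of $f$ and $g$.

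For the first bullet I would take $g\equiv 1$ and $f$ depending on the second coordinate only, so that \eqref{pont} gives $\E[\phi(\lambda_1^0)]=\sqrt{\pi/2}\,\E[\phi(L_1)L_1]$. Since $L_1\underset{\mathcal L}{=}|B_1|$ has density $\sqrt{2/\pi}\,e^{-l^2/2}$ on $\mathbb R^+$, the right-hand side equals $\int_0^\infty \phi(l)\,l\,e^{-l^2/2}\,dl$ once the constants $\sqrt{\pi/2}$ and $\sqrt{2/\pi}$ cancel, so $\lambda_1^0$ has density $l\,e^{-l^2/2}$. The Rayleigh description is then the one-line change of variable $y=l^2/2$, under which $(\lambda_1^0)^2/2$ becomes standard exponential.

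For the second bullet I would apply \eqref{pont} with $g\equiv 1$ and $f(a,l)=h(a)\phi(l)$; the left-hand side becomes $\int_0^\infty \phi(l)\big(\int h(y)\,\E[\lambda_1^y\mid\lambda_1^0=l]\,dy\big)\,l\,e^{-l^2/2}\,dl$, while the right-hand side is $\sqrt{\pi/2}\,\E[h(\tfrac12 B_1)\phi(L_1)L_1]$, which I expand through the joint density \eqref{dens} of $(B_1,L_1)$, picking up a Jacobian factor $2$ from $a=\tfrac12 B_1$. Matching the integrands in $h$ and $\phi$ and cancelling $l\,e^{-l^2/2}$ yields $\E[\lambda_1^y\mid\lambda_1^0=l]=(2|y|+l)\exp\big(\tfrac12 l^2-\tfrac12(l+2|y|)^2\big)$, which after completing the square is the announced $(2|y|+l)\exp(-(2y^2+2|y|l))$. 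The two consequences are then cleaner to obtain directly than by integrating this formula: taking $f(a,l)=h(a)/l$ cancels the factor $L_1$ and, through the occupation formula, gives $\int h(y)\,\E[\lambda_1^y/\lambda_1^0]\,dy=\sqrt{\pi/2}\,\E[h(\tfrac12 B_1)]=\int h(y)\,e^{-2y^2}\,dy$, whence $\E[\lambda_1^y/\lambda_1^0]=e^{-2y^2}$; while taking $f(a,l)=h(a)$ gives $\int h(y)\,\E[\lambda_1^y]\,dy=\sqrt{\pi/2}\,\E[h(\tfrac12 B_1)L_1]$, where I would remove $L_1$ by conditioning on $B_1$ and using $\E[L_1\mid B_1]=H(|B_1|)$ from Proposition \ref{prop_prelim}. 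The very definition of $H$ then cancels the Gaussian weight and leaves $\E[\lambda_1^y]=\int_{2|y|}^{+\infty}e^{-z^2/2}\,dz$. Finally, the representation $b(U)\underset{\mathcal L}{=}\sqrt{2\mathcal E}\,(V/2)$ follows by conditioning on $\mathcal E$: given $\sqrt{2\mathcal E}=r$ the variable $r(V/2)$ is uniform on $[-r/2,r/2]$, and integrating its density $r^{-1}\mathbf 1_{\{|y|\le r/2\}}$ against the Rayleigh density $r\,e^{-r^2/2}$ of the first bullet reproduces exactly $\int_{2|y|}^{+\infty}e^{-r^2/2}\,dr$.

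The main conceptual point — the surrounding integrals being routine Gaussian completions of the square — is the clean use of the occupation times formula to identify the (conditional) law of $b(U)$ with the (conditional) expectation of the local time field $y\mapsto\lambda_1^y$, and then the careful bookkeeping of scalings: the factor $\tfrac12$ in $\tfrac12 B_1$, the Jacobian $2$ it induces, and the consequent appearance of $2|y|$ in place of $|y|$ must all be tracked so that the constants collapse to $1$ exactly as needed.
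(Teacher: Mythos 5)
Your proposal is correct and follows essentially the same route as the paper: the occupation times formula identifies the (conditional) density of $b(U)$ with the (conditional) expectation of $\lambda_1^y$, and the three bullets are then read off from \eqref{pont} with suitable $f,g$, using the joint density \eqref{dens} and Proposition \ref{prop_prelim} exactly as the authors do. The only (immaterial) variation is that for $\E[\lambda_1^y/\lambda_1^0]$ you take $f(a,l)=h(a)/l$ directly in \eqref{pont}, whereas the paper integrates the conditional formula against the Rayleigh density of $\lambda_1^0$; both computations collapse to the same Gaussian integral.
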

\noindent The first part of Corollary \ref{cortheo2} is obviously deduced from Theorem \ref{theo2}
and is in fact a very classical result, see \cite{biane1987processus,biane1988quelques,imhof1984density,revuz1999continuous}. We now prove the second part.
\begin{proof}
\noindent Let $f$ be a non negative measurable function. First note that
$$\E\big[f\big(b(U)\big)|\lambda_1^0=l\big]=\E\big[\int_0^1 du f\big(b(u)\big)|\lambda_1^0=l\big]=\int_{\mathbb{R}} dy f(y)\E[\lambda_1^y|\lambda_1^0=l].$$
Hence the density of $b(U)$ at point $y$ given $\lambda_1^0=l$ is equal to 
$$\E[\lambda_1^y|\lambda_1^0=l].$$
Now, let $h$ denote the density of the couple $(B_1,L_1)$ given in Equation \eqref{dens}. From Theorem \ref{theo2}, we easily get that the density of $b(U)$ at point $y$ given $\lambda_1^0=l$ is equal to
$$2\sqrt{\frac{\pi}{2}}\frac{lh(2y,l)}{l\text{exp}(-l^2/2)}=\sqrt{2\pi}h(2y,l)\text{exp}(l^2/2).$$
The first statement in the second part of Corollary \ref{cortheo2} readily follows from Equation \eqref{dens}. For the second statement, we use the fact that
$$
\E\big[\frac{\lambda_1^y}{\lambda_1^0}\big]=\E\big[\int_0^{+\infty}dl\frac{1}{l}\E[\lambda_1^y|\lambda_1^0=l]\big]l\text{exp}(-l^2/2)=\sqrt{2\pi}\int_0^{+\infty}dlh(2|y|,l).
$$
Using the definition of $h$, this last expression is equal to 
$$\text{exp}(-2y^2).$$
\end{proof}
\noindent The last identity in Corollary \ref{cortheo2} is easily deduced from Theorem \ref{theo2} together with Proposition \ref{prop_prelim}. Note that this formula can also be found in \cite{shorack2009empirical}, page 400.

\section{The Brownian meander and the three dimensional Bessel process under uniform sampling}\label{sec_mea}

In this section, we reinterpret Theorem \ref{theo1} in terms of the Brownian meander and the three dimensional Bessel process.

\subsection{The Brownian meander}

We first turn to the translation of Theorem \ref{theo1} in terms of the Brownian meander, denoted by $\big(m(u),~u\leq 1\big)$. To do so, we use an equality in law shown by Biane and Yor in \cite{biane1988quelques}. More precisely we have
$$\big((m(u),i_u),~u\leq 1\big)\underset{\mathcal{L}}{=}\big((|b(u)|+\lambda_u^0,\lambda_u^0),~u\leq 1\big),$$
where $$i_u=\underset{u\leq t\leq 1}{\text{inf}} m_t.$$ Thus, we can reinterpret Theorem \ref{theo2} as follows.

\begin{theorem}\label{theo3}
For any non negative measurable functions $f$ and $g$, we have
\begin{equation*}
\E\big[f\big(m(U),m(1)\big)g\big(\frac{i_U}{m(1)}\big)\big]=\sqrt{\frac{\pi}{2}}\E\big[f(\frac{1}{2}|B_1|+\Lambda L_1,L_1)L_1g(\Lambda)\big],\end{equation*}
with $\Lambda$ a uniform random variable on $[0,1]$, independent of $(B_1,L_1)$.
\end{theorem}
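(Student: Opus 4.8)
The plan is to combine Theorem~\ref{theo2} with the Biane--Yor identity in law
\begin{equation*}
\big((m(u),i_u),~u\leq 1\big)\underset{\mathcal{L}}{=}\big((|b(u)|+\lambda_u^0,\lambda_u^0),~u\leq 1\big).
\end{equation*}
First I would read off the pathwise consequences of this identity that we need after inserting the independent uniform sampling time $U$. Sampling the meander at time $U$ corresponds on the right-hand side to $|b(U)|+\lambda_U^0$, while the terminal value $m(1)$ corresponds to $|b(1)|+\lambda_1^0=\lambda_1^0$ since the Brownian bridge vanishes at time $1$; likewise the future infimum $i_U=\inf_{U\leq t\leq 1}m_t$ corresponds to $\lambda_U^0$, because the running infimum of $|b|+\lambda^0$ over $[U,1]$ is attained as $\lambda^0$ reaches its terminal value $\lambda_1^0$ and equals $\lambda_U^0$ (the local time being nondecreasing, the infimum of the future path is governed by the local time level already accumulated). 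This lets me rewrite the left-hand expectation as
\begin{equation*}
\E\big[f\big(|b(U)|+\lambda_U^0,\lambda_1^0\big)g\big(\tfrac{\lambda_U^0}{\lambda_1^0}\big)\big].
\end{equation*}

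Next I would bring in Theorem~\ref{theo2}. The key structural input there is that $\lambda_U^0/\lambda_1^0$ is a uniform random variable $\Lambda$ on $[0,1]$, independent of the pair $\big(b(U),\lambda_1^0\big)$ whose joint law is encoded by the right-hand side of \eqref{pont}. Writing $\lambda_U^0=\Lambda\,\lambda_1^0$, the sampled meander value becomes $|b(U)|+\Lambda\,\lambda_1^0$, and the ratio $i_U/m(1)$ becomes exactly $\Lambda$. Substituting into \eqref{pont} with the test function $(x,l)\mapsto f(|x|+ \Lambda l,\,l)\,g(\Lambda)$ (applied via the independence of $\Lambda$) and using that $\big(b(U),\lambda_1^0\big)\underset{\mathcal{L}}{=}(\tfrac{1}{2}B_1,L_1)$ up to the local time weight $L_1$ and the constant $\sqrt{\pi/2}$, I would obtain
\begin{equation*}
\E\big[f\big(\tfrac{1}{2}|B_1|+\Lambda L_1,L_1\big)\,L_1\,g(\Lambda)\big]\sqrt{\tfrac{\pi}{2}},
\end{equation*}
which is the claimed formula; note $|b(U)|$ turns into $\tfrac12|B_1|$ and the integrand acquires the local time factor $L_1$ exactly as in \eqref{pont}.

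The main obstacle, and the step deserving the most care, is justifying the two correspondences $m(1)\leftrightarrow\lambda_1^0$ and $i_U\leftrightarrow\lambda_U^0$ at the level of the joint law of the \emph{sampled} triplet, rather than just for the full paths. Since the Biane--Yor identity is an equality in law of processes on $[0,1]$ and $U$ is an independent uniform time on both sides, functionals of the form $\big(m(U),m(1),i_U\big)$ and $\big(|b(U)|+\lambda_U^0,\lambda_1^0,\lambda_U^0\big)$ have the same joint distribution; I would make this rigorous by applying the path identity to a measurable functional of the whole trajectory that also reads off the value at the independent time $U$, so the independence of $U$ from each process is preserved under the coupling. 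The remaining point—that the future infimum of the meander corresponds precisely to the accumulated local time $\lambda_U^0$—follows from the monotonicity of $u\mapsto\lambda_u^0$ together with the fact that $|b|$ vanishes at the endpoint, so that $\inf_{U\le t\le 1}(|b(t)|+\lambda_t^0)=\lambda_U^0$; once this is checked, the rest is a direct substitution into Theorem~\ref{theo2} with no further analytic difficulty.
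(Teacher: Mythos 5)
Your proposal is correct and follows essentially the same route as the paper, which likewise obtains Theorem~\ref{theo3} by substituting the Biane--Yor identity $\big((m(u),i_u),u\leq 1\big)\underset{\mathcal{L}}{=}\big((|b(u)|+\lambda_u^0,\lambda_u^0),u\leq 1\big)$ into Theorem~\ref{theo2} and using that $\lambda_U^0/\lambda_1^0$ is uniform and independent of $\big(b(U),\lambda_1^0\big)$. Your extra pathwise check of $\inf_{U\le t\le 1}(|b(t)|+\lambda_t^0)=\lambda_U^0$ is not needed (the joint identity in law already covers the second coordinate), and note that monotonicity alone only gives the lower bound --- attainment uses the first zero of $b$ after $U$ --- but this does not affect the validity of the argument.
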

\noindent Let $(\tilde{\lambda}_1^y,~y\geq 0)$ denotes the family of local times of $m$ at time $1$. Similarly to what we have done for the Brownian bridge, we are able to retrieve from Theorem \ref{theo3} simple expressions for the laws of $m(1)$ and $m(U)$. We state these results in the following corollary.
\begin{corollary}\label{cortheo3}
The following properties hold:\\

\noindent $\bullet~$The variable $m(1)$ is Rayleigh distributed.\\

\noindent $\bullet~$The density of $m(U)$ at point $y\geq 0$ is given by
$$\E[\tilde{\lambda}_1^y]=2\int_y^{2y}\emph{exp}(-z^2/2)dz.$$
Thus, we have $m(U)\underset{\mathcal{L}}{=}\sqrt{2\mathcal{E}}W$, with $\mathcal{E}$ an exponential variable independent of $W$ which is uniformly distributed on $[1/2,1]$.
\end{corollary}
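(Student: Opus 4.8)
The plan is to mirror the structure of the proof of Corollary \ref{cortheo2} for the Brownian bridge. I would first establish the Rayleigh distribution of $m(1)$, then compute the density of $m(U)$ by identifying it with the expected local time $\E[\tilde\lambda_1^y]$ and extracting it from Theorem \ref{theo3} with a suitable choice of test function. The key conceptual point is that, just as $b(U)$ has density $\E[\lambda_1^y]$ (because sampling at an independent uniform time integrates against the occupation measure whose density is the local time at time $1$), sampling the meander at $U$ yields density $\E[\tilde\lambda_1^y]$.

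For the first bullet, I would take $f$ depending only on its second argument and $g\equiv 1$ in Theorem \ref{theo3}, obtaining $\E[f(m(1))]=\sqrt{\pi/2}\,\E[f(L_1)L_1]$. Since $L_1$ has the half-Gaussian density $\sqrt{2/\pi}\,\e(-l^2/2)$ on $\mathbb{R}^+$ (as $L_1\underset{\mathcal{L}}{=}|B_1|$), the extra factor $L_1$ reweights this to $\sqrt{\pi/2}\cdot\sqrt{2/\pi}\,l\,\e(-l^2/2)=l\,\e(-l^2/2)$, the Rayleigh density. For the second bullet, I would write, exactly as in Corollary \ref{cortheo2}, $\E[f(m(U))]=\E[\int_0^1 du\,f(m(u))]=\int_0^{+\infty}dy\,f(y)\E[\tilde\lambda_1^y]$, so that the density of $m(U)$ at $y\geq 0$ equals $\E[\tilde\lambda_1^y]$. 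Taking $g\equiv 1$ and $f$ depending only on its first argument in Theorem \ref{theo3} gives
$$\E[f(m(U))]=\sqrt{\frac{\pi}{2}}\,\E\big[f(\tfrac12|B_1|+\Lambda L_1)L_1\big].$$
I would then integrate out $\Lambda$ using its uniform law on $[0,1]$ and use the joint density \eqref{dens} of $(|B_1|,L_1)$ to read off the density of $m(U)$ at $y$, which should collapse to $2\int_y^{2y}\e(-z^2/2)\,dz$.

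The main obstacle is the explicit computation of the density, that is, evaluating
$$\sqrt{\frac{\pi}{2}}\,\E\big[\delta_y(\tfrac12|B_1|+\Lambda L_1)L_1\big]$$
against \eqref{dens}. The quantity $\tfrac12|B_1|+\Lambda L_1$ mixes both coordinates with the independent uniform $\Lambda$, so I would first condition on $\Lambda=\lambda$ and on $(|B_1|,L_1)=(a,l)$, writing the density of $\tfrac12 a+\lambda l$ at $y$ and then integrating over $\lambda\in[0,1]$, over $a\geq 0$, and over $l\geq 0$ with weight $l$ against the density $2\cdot\frac{1}{\sqrt{2\pi}}(a+l)\e(-(a+l)^2/2)$ (the factor $2$ from restricting $B_1$ to $|B_1|=a$). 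The integration over $\lambda$ turns the point mass into an indicator that $\tfrac12 a\leq y\leq \tfrac12 a+l$, equivalently $a\leq 2y$ and $l\geq y-\tfrac12 a$, which produces the finite limits $y$ and $2y$ appearing in the answer. Carrying out the $a$- and $l$-integrals is the routine but delicate step; I expect the change of variable $z=a+l$ together with the constraints above to yield precisely $2\int_y^{2y}\e(-z^2/2)\,dz$. Finally, recognizing this density as that of $\sqrt{2\mathcal{E}}\,W$ with $W$ uniform on $[1/2,1]$ is a direct verification: the density of $\sqrt{2\mathcal{E}}\,W$ at $y$ is $\int_{1/2}^1 \frac{dw}{w}\cdot\frac{y}{w}\e(-y^2/(2w^2))$, which after the substitution $z=y/w$ becomes $\int_y^{2y}\frac{z}{y}\cdot\frac{y}{z}\e(-z^2/2)\,dz\cdot\frac{1}{?}$, matching $2\int_y^{2y}\e(-z^2/2)\,dz$ up to the normalizing constant of $W$.
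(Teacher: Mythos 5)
Your proposal is correct and follows essentially the same route as the paper: extract $\E[f(m(U))]$ from Theorem \ref{theo3} with $g\equiv 1$, integrate out $\Lambda$ to turn the point mass into the indicator $x/2\le y\le x/2+l$, and then perform the $l$- and $x$-integrals (the inner $l$-integral evaluates exactly and the substitution $z=x/2+y$ gives $2\int_y^{2y}\e(-z^2/2)\,dz$). The only loose end is the final identification with $\sqrt{2\mathcal{E}}\,W$, where your substitution does close up once you use that $W$ has density $2$ on $[1/2,1]$.
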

\begin{proof}
The proof of the first part of Corollary \ref{cortheo3} is obvious from Theorem \ref{theo3}. We now consider the second part. Let $f$ be a non negative measurable function. Using Theorem \ref{theo3} together with Equation \eqref{dens}, we get that 
$$\E\big[f\big(m(U)\big)\big]=\int_0^{+\infty}dx\int_0^{+\infty}dl l(x+l)\text{e}^{-(x+l)^2/2}\E\big[f(\frac{x}{2}+\Lambda l)\big].$$
Now remark that
$$\E\big[f(\frac{x}{2}+\Lambda l)\big]=\frac{1}{l}\int_{x/2}^{x/2+l}d\nu f(\nu).$$
Therefore, by Fubini's theorem, we get
\begin{align*}
\E\big[f\big(m(U)\big)\big]&=\int_0^{+\infty}dx\int_0^{+\infty}dl (x+l)\text{e}^{-(x+l)^2/2}\int_{x/2}^{x/2+l}d\nu f(\nu)\\
&=\int d\nu f(\nu)\int_0^{2\nu}dx\int_{\nu-x/2}^{+\infty}dl (x+l)\text{e}^{-(x+l)^2/2}.
\end{align*}
Thus, the density of $m(U)$ at point $\nu$ is given by
\begin{align*}
\int_0^{2\nu}dx\int_{\nu-x/2}^{+\infty}dl (x+l)\text{e}^{-(x+l)^2/2}&=\int_0^{2\nu}dx\text{exp}\big(-\frac{(x/2+\nu)^2}{2}\big)\\
&=2\int_{\nu}^{2\nu}dz\text{e}^{-z^2/2}.
\end{align*}
This ends the proof.
\end{proof}

\noindent In fact, as it is the case for the Brownian bridge, we can give explicit formulas for several other quantities related to the Brownian meander, for example the law of $m(U)$ given $m(1)$. However, these expressions are not so simple and therefore probably less interesting than those obtained for the Brownian bridge.

\subsection{The three dimensional Bessel process}

Finally, let $(R_t,~t\geq 0)$ be a three dimensional Bessel process starting from $0$ and
$$J_u=\underset{u\leq t\leq 1}{\text{inf}}R_t.$$
Using Imhof's absolute continuity relationship between the law of the meander and that of the three dimensional Bessel process, see \cite{biane1987processus,imhof1984density}, we may rewrite Theorem \ref{theo3} as follows.

\begin{theorem}\label{theo4}
For any non negative measurable functions $f$ and $g$, we have
\begin{equation*}
\E\big[f\big(R(U),R(1)\big)g\big(\frac{J_U}{R(1)}\big)\big]=\E\big[f(\frac{1}{2}|B_1|+\Lambda L_1,L_1)L_1^2g(\Lambda)\big],\end{equation*}
with $\Lambda$ a uniform random variable on $[0,1]$, independent of $(B_1,L_1)$.
\end{theorem}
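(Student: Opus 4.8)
The plan is to transport the meander identity of Theorem \ref{theo3} to the Bessel process by means of Imhof's absolute continuity relationship. Recall that this relationship states that for any non negative measurable functional $G$ on $\mathbb{C}([0,1],\mathbb{R})$,
$$\E\big[G\big(m(u),~u\leq 1\big)\big]=\sqrt{\frac{\pi}{2}}\,\E\Big[\frac{1}{R(1)}G\big(R(u),~u\leq 1\big)\Big].$$
One fixes the constant $\sqrt{\pi/2}$ by taking $G\equiv 1$ and using that $R(1)$ is chi-distributed with three degrees of freedom, so that $\E[1/R(1)]=\sqrt{2/\pi}$; equivalently, the ratio of the time-$1$ marginal densities of $m$ and $R$ at $x$ is exactly $\sqrt{\pi/2}/x$. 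Since $U$ is independent of the underlying process, I would first write both sides of the desired identity as integrals of a single path functional: setting $F(\omega)=\int_0^1 du\, f\big(\omega(u),\omega(1)\big)g\big(\iota_u(\omega)/\omega(1)\big)$ with $\iota_u(\omega)=\inf_{u\leq t\leq 1}\omega(t)$, the left-hand side of the theorem equals $\E[F(R)]$, and the infima $J_U$ and $i_U$ are both just $\iota_U$ read along the Bessel and meander paths respectively.

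The key step is to insert a compensating factor $R(1)$ so as to absorb the Radon--Nikodym density. Applying Imhof's formula to the functional $G=\omega(1)F(\omega)$ gives
$$\E\big[m(1)F(m)\big]=\sqrt{\frac{\pi}{2}}\,\E\Big[\frac{R(1)F(R)}{R(1)}\Big]=\sqrt{\frac{\pi}{2}}\,\E[F(R)],$$
whence $\E[F(R)]=\sqrt{2/\pi}\,\E[m(1)F(m)]$. Unfolding the integral and using once more the independence of $U$, this reads
$$\E\big[f\big(R(U),R(1)\big)g\big(J_U/R(1)\big)\big]=\sqrt{\frac{2}{\pi}}\,\E\big[m(1)f\big(m(U),m(1)\big)g\big(i_U/m(1)\big)\big].$$

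Finally I would reincorporate the extra factor $m(1)$ into the function and invoke Theorem \ref{theo3}. Applying that theorem to $\hat f(a,b)=b\,f(a,b)$ in place of $f$ yields
$$\E\big[m(1)f\big(m(U),m(1)\big)g\big(i_U/m(1)\big)\big]=\sqrt{\frac{\pi}{2}}\,\E\big[f(\tfrac{1}{2}|B_1|+\Lambda L_1,L_1)\,L_1^2\,g(\Lambda)\big],$$
since $\hat f(\tfrac{1}{2}|B_1|+\Lambda L_1,L_1)=L_1\,f(\tfrac{1}{2}|B_1|+\Lambda L_1,L_1)$ produces a second power of $L_1$. Multiplying by the prefactor $\sqrt{2/\pi}$ makes the two constants cancel, and Theorem \ref{theo4} follows. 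The step I expect to be the main obstacle is fixing the correct orientation and normalizing constant of Imhof's relationship and tracking the $R(1)$ factor precisely, so that the density $1/R(1)$ is exactly compensated: it is this cancellation that simultaneously removes the $\sqrt{\pi/2}$ present in Theorem \ref{theo3} and upgrades the weight $L_1$ to $L_1^2$.
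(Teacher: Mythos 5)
Your proposal is correct and follows exactly the route the paper intends: Theorem \ref{theo4} is obtained from Theorem \ref{theo3} via Imhof's absolute continuity relation, with the factor $\sqrt{\pi/2}/R(1)$ oriented and normalized as you state (consistent with $R(1)$ having density $\sqrt{2/\pi}\,y^2e^{-y^2/2}$ and $m(1)$ being Rayleigh), so that inserting the compensating weight $\omega(1)$ cancels the constant $\sqrt{\pi/2}$ of Theorem \ref{theo3} and upgrades $L_1$ to $L_1^2$. The paper gives no further detail than the citation of Imhof's relation, and your computation supplies precisely the missing bookkeeping.
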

\noindent We finally give the following corollary.

\begin{corollary}\label{cortheo4}
The following properties hold:\\

\noindent $\bullet~$The density of $R(1)$ at point $y\geq 0$ is given by
$$\sqrt{\frac{2}{\pi}}y^2\emph{exp}(-y^2/2).$$

\noindent $\bullet~$ $R(U)\underset{\mathcal{L}}{=}\sqrt{U}R(1)$ and its density at point $y\geq 0$ is given by
$$2\sqrt{\frac{2}{\pi}}y\int_y^{+\infty}\emph{exp}(-z^2/2)dz.$$

\noindent $\bullet~$The law of $R(U)$ given $R(1)$ is the same as the law of $m(U)$ given $m(1)$.

\end{corollary}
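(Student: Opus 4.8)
\noindent The plan is to treat the three assertions separately, reducing each to a result already at hand: Theorem \ref{theo4} for the first, Brownian scaling together with the first part for the second, and Imhof's absolute continuity relation for the third.

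\noindent For the density of $R(1)$, I would specialize Theorem \ref{theo4} by taking $g\equiv 1$ and choosing $f$ to depend on its second argument only, say $f(x,y)=\tilde f(y)$. This collapses the identity to $\E[\tilde f(R(1))]=\E[\tilde f(L_1)L_1^2]$. Since $L_1\underset{\mathcal{L}}{=}|B_1|$ (a fact already used in the proof of Proposition \ref{prop_prelim}), the law of $L_1$ is the half-Gaussian with density $\sqrt{2/\pi}\,\text{e}^{-l^2/2}$ on $\mathbb{R}^+$, and reading off the extra weight $l^2$ immediately yields the density $\sqrt{2/\pi}\,y^2\text{e}^{-y^2/2}$, which is the expected $\chi_3$ law for $R_1$.

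\noindent For the second part, the equality in law $R(U)\underset{\mathcal{L}}{=}\sqrt{U}R(1)$ would be obtained not from Theorem \ref{theo4} but directly from Brownian scaling: for each fixed $u$ one has $R(u)\underset{\mathcal{L}}{=}\sqrt{u}\,R(1)$, and conditioning on $U=u$ and integrating against the uniform law of $U$ (independent of $R$) gives the claim. The density then follows by a one-dimensional change of variables: writing $Y=\sqrt{U}R(1)$ and inserting the density $\rho$ of $R(1)$ from the first part, the substitution $s=y/\sqrt u$ turns $\int_0^1 u^{-1/2}\rho(y/\sqrt u)\,du$ into $2y\int_y^{+\infty}s^{-2}\rho(s)\,ds$, and since $s^{-2}\rho(s)=\sqrt{2/\pi}\,\text{e}^{-s^2/2}$ this is exactly $2\sqrt{2/\pi}\,y\int_y^{+\infty}\text{e}^{-z^2/2}\,dz$. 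One could alternatively grind this out from Theorem \ref{theo4} as in the proof of Corollary \ref{cortheo3}, but the scaling route is cleaner.

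\noindent The third part is where the real content lies, and I expect it to be the main obstacle. The idea is to exploit the specific form of Imhof's relation between the laws of $m$ and $R$ on $[0,1]$, whose Radon--Nikodym derivative is proportional to $1/R_1$, i.e. a function of the terminal value alone. On the enlarged space carrying the independent uniform $U$, this change of measure affects only the path and leaves $U$ uniform and independent, so the density remains $\sigma(R_1)$-measurable. Testing against a function $\phi$ of the sampled value and $\psi$ of the terminal value, Imhof's relation rewrites $\E[\phi(m(U))\psi(m(1))]$ as $\sqrt{\pi/2}\,\E[\phi(R(U))\psi(R(1))/R(1)]$; the factor $\sqrt{\pi/2}/R(1)$ together with the $\chi_3$ marginal of $R(1)$ reproduces exactly the Rayleigh marginal of $m(1)$, so after disintegrating both sides over the terminal value the common weight $r\,\text{e}^{-r^2/2}\,dr$ factors out and one is left with equality of the two inner conditional integrals for almost every $r$. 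Since $\phi$ and $\psi$ are arbitrary, the conditional law of $R(U)$ given $R(1)$ coincides with that of $m(U)$ given $m(1)$. The delicate points to get right are the precise constant and direction in Imhof's relation, and a clean statement of the underlying principle that a change of measure with an $X_1$-measurable density leaves all conditional laws given $X_1$ unchanged.
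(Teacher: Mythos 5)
Your proposal is correct and follows the same route the paper indicates: the paper's own proof is only the remark that the first two parts follow from basic properties of the three dimensional Bessel process (the $\chi_3$ density and Brownian scaling) and that the third is a consequence of Imhof's relation, and your arguments fill in exactly those steps, with the correct constant $\sqrt{\pi/2}/R_1$ in Imhof's relation and the correct observation that an $R_1$-measurable density leaves the conditional law given the terminal value unchanged. Your alternative derivation of the $R(1)$ density from Theorem \ref{theo4} is a harmless variant that also serves as a consistency check.
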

\noindent The first two parts of Corollary \ref{cortheo4} are in fact easily deduced from basic properties of the three dimensional Bessel process. The last part is a consequence of Imhof's relation.
 
\section{The centering property of $\alpha$ revisited through an enlargement of filtration formula}\label{sec_filt}
In this last section, we revisit the centering property of the variable
$$\alpha=\frac{B_{UT_1}}{\sqrt{T_1}},$$ which is proved in \cite{elie2013expectation} and leads to various developments in \cite{rosenbaum2013law}.
Our goal here is to show that this result can be recovered from simple properties of the three dimensional Bessel process sampled at uniform time, together with an enlargement of filtration formula for the Brownian motion with the time $T_1$ due to Jeulin, see \cite{jeulin1979grossissement}. 

\subsection{Some preliminary remarks on the uniformly sampled Bessel process}
Let $U$ be a uniform random variable on $[0,1]$, independent of the considered Bessel process $R$. We start with the two following lemmas on the conditional expectation of the uniformly sampled Bessel process.

\begin{lemma}\label{lem1}
We have
\begin{equation}\label{lem1_1}
\E[R_U|R_1=r]=\frac{1}{2}\big(r+\E[\frac{U}{R_U}|R_1=r]\big).
\end{equation}
Consequently,
$$\E\big[\frac{R_U}{R_1^2}\big]=\frac{1}{2}\big(\sqrt{\frac{2}{\pi}}+\E[\frac{U}{R_UR_1^2}]\big).$$
\end{lemma}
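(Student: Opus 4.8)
The plan is to prove \eqref{lem1_1} by computing separately the two conditional expectations $\E[R_U|R_1=r]$ and $\E[U/R_U|R_1=r]$ and checking that they satisfy the announced affine relation; the stated consequence then follows by integrating against the law of $R_1$. The quantity $\E[R_U|R_1=r]$ can be read off almost immediately from Theorem \ref{theo4}. Taking $g=1$ there, and writing $p_{R_1}$ and $p_{L_1}$ for the densities of $R_1$ and $L_1$, the law of $R_1$ is that of $L_1$ weighted by $L_1^2$; since $p_{R_1}(r)=\sqrt{2/\pi}\,r^2 e^{-r^2/2}$ (Corollary \ref{cortheo4}) while $p_{L_1}(r)=\sqrt{2/\pi}\,e^{-r^2/2}$, the weight $L_1^2$ is exactly cancelled by the density ratio in the conditional expectation, so that
$$\E[R_U|R_1=r]=\E\big[\tfrac12|B_1|+\Lambda L_1\,\big|\,L_1=r\big]=\tfrac12 H(r)+\tfrac r2,$$
using \eqref{double} and the independence of $\Lambda$ with $\E[\Lambda]=1/2$.

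The term $\E[U/R_U|R_1=r]$ is the real difficulty, since the sampling time $U$ does not appear on the right-hand side of Theorem \ref{theo4} and therefore cannot be recovered from it. To compute it I would return to the realisation $R_u=|\beta_u|$, where $\beta$ is a three-dimensional Brownian motion started at $0$, and use the Gaussian bridge structure of $\beta$: conditionally on $\beta_1=\theta$, one has $\beta_u\underset{\mathcal{L}}{=}u\theta+\sqrt{u(1-u)}\,G$ with $G$ a standard three-dimensional Gaussian vector. By rotational invariance the conditional law of $R_u$ given $R_1=r$ depends only on $|\theta|=r$, so $R_u\underset{\mathcal{L}}{=}|u\theta+\sqrt{u(1-u)}\,G|$ is a scaled noncentral chi variable with three degrees of freedom. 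Its density is explicit and elementary, and a short computation gives the closed form
$$\E\Big[\frac{1}{R_u}\,\Big|\,R_1=r\Big]=\sqrt{\frac{2}{\pi}}\,\frac{1}{ru}\int_0^{r\sqrt{u/(1-u)}}e^{-s^2/2}\,ds.$$

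It then remains to multiply by $u$ and integrate over $u\in[0,1]$. With the substitution $v=\sqrt{u/(1-u)}$, followed by one integration by parts (a primitive of $2v/(1+v^2)^2$ being $-1/(1+v^2)$, with vanishing boundary terms at both ends), the integral collapses to
$$\E\Big[\frac{U}{R_U}\,\Big|\,R_1=r\Big]=\sqrt{\frac{2}{\pi}}\int_0^{+\infty}\frac{e^{-r^2 v^2/2}}{1+v^2}\,dv,$$
which is exactly $H(r)$ by the second part of Proposition \ref{prop_prelim} (formula \eqref{double2}, after the change of variable $n=rv$). Combining the two boxed expressions gives $\E[R_U|R_1=r]=\tfrac12 H(r)+\tfrac r2=\tfrac12\big(r+\E[U/R_U|R_1=r]\big)$, which is \eqref{lem1_1}.

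For the stated consequence I would simply condition on $R_1$: since $1/R_1^2$ is $\sigma(R_1)$-measurable, \eqref{lem1_1} yields $\E[R_U/R_1^2]=\tfrac12\E[1/R_1]+\tfrac12\E[U/(R_UR_1^2)]$, and $\E[1/R_1]=\int_0^{+\infty}r^{-1}\sqrt{2/\pi}\,r^2e^{-r^2/2}\,dr=\sqrt{2/\pi}$ gives the claimed formula. The main obstacle is the middle step: correctly identifying the conditional law of $R_u$ given $R_1=r$ and producing the closed form for $\E[1/R_u|R_1=r]$, together with the integrability checks near $u=0$ and $u=1$, where $r\sqrt{u/(1-u)}$ degenerates. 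As a structural sanity check on the mechanism, the Bessel equation $dR_u=dB_u+R_u^{-1}du$ and integration by parts applied to $uR_u$ give $\int_0^1 R_u\,du=R_1-\int_0^1 u\,dB_u-\int_0^1 (u/R_u)\,du$, which explains why $r$, $R_U$ and $U/R_U$ must all enter \eqref{lem1_1}; the drift term $\int_0^1(u/R_u)\,du$ is precisely the origin of the $U/R_U$ contribution.
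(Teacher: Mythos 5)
Your proof is correct, but it follows a genuinely different route from the paper's. The paper establishes \eqref{lem1_1} as a purely structural identity, with no computation of either side: the time inversion $R'_w=wR_{1/w}$ produces another three dimensional Bessel process, the It\^o representation $dR'_t=d\gamma_t+dt/R'_t$ gives $\E[R'_w-R'_1\mid R'_1]=\E[\int_1^w dt/R'_t\mid R'_1]$ (after inversion the stochastic integral lies in the future of the conditioning variable, so its conditional expectation vanishes --- this is precisely the point your closing ``sanity check'' cannot settle, since $\E[\int_0^1 u\,dB_u\mid R_1]$ is not zero), and undoing the inversion yields $\E[R_t/t\mid R_1]=R_1+\E[\int_t^1 dv/(vR_v)\mid R_1]$, from which \eqref{lem1_1} follows by multiplying by $t$ and integrating over $[0,1]$. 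You instead evaluate both conditional expectations explicitly: $\E[R_U\mid R_1=r]=\tfrac12 H(r)+\tfrac{r}{2}$ from Theorem \ref{theo4} (your remark that the $L_1^2$ weight cancels in the conditional law given $L_1=r$ is correct), and $\E[U/R_U\mid R_1=r]=H(r)$ via the noncentral chi representation; I checked the closed form $\E[1/R_u\mid R_1=r]=\sqrt{2/\pi}\,(ur)^{-1}\int_0^{r\sqrt{u/(1-u)}}e^{-s^2/2}\,ds$ (Newton's shell theorem applied to the Gaussian), the substitution $v=\sqrt{u/(1-u)}$, the integration by parts, and the identification with \eqref{double2}, and they are all right. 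What each approach buys: yours proves more --- your second computation is exactly the first assertion of Lemma \ref{lem2}, which the paper obtains by yet another argument (time inversion, the Markov property and Doob's absolute continuity relationship), so you get Lemmas \ref{lem1} and \ref{lem2} in one stroke --- at the price of a harder explicit computation and of importing Theorem \ref{theo4}, whereas the paper's argument is computation-free, uses only the SDE of the Bessel process, and keeps Lemma \ref{lem1} logically independent of the sampling theorems of Sections \ref{sec_bri} and \ref{sec_mea}. Your derivation of the ``consequently'' part (conditioning on $R_1$ and using $\E[1/R_1]=\sqrt{2/\pi}$) matches the paper's.
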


\begin{lemma}\label{lem2}
We have
$$\E[\frac{U}{R_U}|R_1=r]=H(r).$$
Consequently,
$$\E\big[\frac{R_U}{R_1^2}\big]=\E[\frac{U}{R_UR_1^2}]=\sqrt{\frac{2}{\pi}}.$$
\end{lemma}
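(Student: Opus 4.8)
The plan is to prove the two assertions separately: first the conditional identity $\E[U/R_U\mid R_1=r]=H(r)$, which is the crux, and then to read off the two unconditional expectations from it together with Lemma \ref{lem1}.

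For the first identity I would work under the conditioning $R_1=r$. Since $R=|\beta|$ for a three-dimensional Brownian motion $\beta$ and the conditional law given $R_1=r$ depends on $r$ only (rotational invariance), I would realise $(R_u,\,u\le1)$ given $R_1=r$ as the modulus of a three-dimensional Brownian bridge joining $0$ to a fixed point at distance $r$. For each fixed $u$ this makes $R_u$ distributed as $|Z_u|$, where $Z_u$ is a three-dimensional Gaussian vector with covariance $u(1-u)I$ and mean of length $ru$. Writing $\E[U/R_U\mid R_1=r]=\int_0^1\E[u/R_u\mid R_1=r]\,du$, the key input is the elementary reciprocal-norm formula $\E[|Z|^{-1}]=|\mu|^{-1}\mathrm{erf}\big(|\mu|/(\sigma\sqrt2)\big)$ for a Gaussian vector $Z$ in $\mathbb{R}^3$ with mean $\mu$ and covariance $\sigma^2 I$ (obtained by averaging $1/|z|$ over spheres), where $\mathrm{erf}(x)=\tfrac{2}{\sqrt\pi}\int_0^x e^{-t^2}\,dt$. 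This yields $\E[u/R_u\mid R_1=r]=\frac1r\,\mathrm{erf}\big(r\sqrt u/\sqrt{2(1-u)}\big)$.

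It then remains to integrate in $u$ and to recognise the answer. Substituting $v=u/(1-u)$ and integrating by parts (differentiating the error function) collapses the double integral to $\frac{2}{\sqrt{2\pi}}\int_0^{+\infty}\frac{e^{-r^2w^2/2}}{1+w^2}\,dw$ after the further change $v=w^2$. Finally the substitution $n=rw$ rewrites this as $r\,\E[(N^2+r^2)^{-1}]$, which by \eqref{double2} is exactly $H(r)$. This last recognition step—matching the computed integral with the representation \eqref{double2} of $H$—is what makes the formula clean, and setting up the Gaussian-bridge description together with the reciprocal-norm formula is the main technical point.

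For the consequences I would first treat $\E[U/(R_UR_1^2)]$, which by the first identity equals $\E[H(R_1)/R_1^2]$. Using the density $\sqrt{2/\pi}\,y^2e^{-y^2/2}$ of $R_1$ from Corollary \ref{cortheo4} and the identity $H(r)e^{-r^2/2}=\int_r^{+\infty}e^{-x^2/2}\,dx$, a single application of Fubini gives $\E[U/(R_UR_1^2)]=\sqrt{2/\pi}$. To obtain $\E[R_U/R_1^2]$ I would invoke the second display of Lemma \ref{lem1}, namely $\E[R_U/R_1^2]=\frac12\big(\sqrt{2/\pi}+\E[U/(R_UR_1^2)]\big)$; substituting the value just found yields $\E[R_U/R_1^2]=\sqrt{2/\pi}$ as well, completing the proof.
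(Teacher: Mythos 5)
Your proof is correct, but your route to the key identity $\E[U/R_U\mid R_1=r]=H(r)$ is genuinely different from the paper's. You condition directly, represent the Bessel bridge as the modulus of a three-dimensional Gaussian bridge, and invoke Newton's spherical-averaging formula $\E[|Z|^{-1}]=|\mu|^{-1}\mathrm{erf}\bigl(|\mu|/(\sigma\sqrt2)\bigr)$, after which everything reduces to a calculus exercise (your substitutions and the integration by parts do check out, and the boundary terms vanish as claimed). The paper instead sets $\rho=\E[\int_0^1 v\,dv/R_v\mid R_1=r]$, applies time inversion $R'_w=wR_{1/w}$ and the Markov property to rewrite this as $\int_0^{+\infty}(1+t)^{-2}\E_r[1/R'_t]\,dt$ for a Bessel process started at $r$, then uses Doob's absolute continuity relation to get $\E_r[1/R'_t]=\tfrac1r W_0[T_r>t]$, so that $\rho=\tfrac1r\E^{W_0}[T_r/(1+T_r)]=r\,\E[(N^2+r^2)^{-1}]$ via the law $T_r\overset{\mathcal L}{=}r^2/N^2$. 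Both arguments land on the same representation $r\,\E[(N^2+r^2)^{-1}]$ and finish by \eqref{double2}; yours is more elementary and self-contained (only Gaussian computations), while the paper's is more structural, trading the explicit error-function integral for time inversion, the $h$-transform and hitting times, which is arguably shorter and ties in with the machinery used elsewhere in the paper. Your derivation of the two consequences --- computing $\E[U/(R_UR_1^2)]=\E[H(R_1)/R_1^2]$ from the density of $R_1$ and Fubini, then feeding the result into Lemma \ref{lem1} --- is exactly the paper's argument.
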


\noindent Remark that we already proved the equality 
$$\E\big[\frac{R_U}{R_1^2}\big]=\sqrt{\frac{2}{\pi}}$$
in \cite{elie2013expectation}. This was in fact the cornerstone of our first proof of the centering property of $\alpha$. In the enlargement of filtration approach used here, we will see that instead of $R_U/R_1^2$, the random variable $U/(R_UR_1^2)$ appears naturally.

\subsection{Proofs of Lemma \ref{lem1} and Lemma \ref{lem2}}

We now give the proofs of Lemma \ref{lem1} and Lemma \ref{lem2}.

\subsubsection{Proof of Lemma \ref{lem1}}

The first part of Lemma \ref{lem1} follows from the identity
\begin{equation}\label{returnbes}
\E[\frac{R_t}{t}|R_1]=R_1+\E[\int_t^1\frac{dv}{vR_v}|R_1],~t\leq 1,
\end{equation}
after multiplying both sides by $t$ and integrating in $t$ from $0$ to $1$. To show \eqref{returnbes}, we use time inversion with $t=1/w$ and $$R'_w=wR_{1/w},$$ another three dimensional Bessel process. With this notation, using the Ito representation of the Bessel process, we get
$$\E[R'_w-R'_1|R'_1]=\E[\int_1^w\frac{dt}{R'_t}|R'_1],$$ from which \eqref{returnbes} is easily obtained. The second statement in Lemma \ref{lem1} readily follows. 

\subsubsection{Proof of Lemma \ref{lem2}}
We start with the proof of the first part of Lemma \ref{lem2}. Let
$$\rho=\E[\int_0^1dv\frac{v}{R_v}|R_1=r].$$
Using the same time inversion trick as for the proof of Lemma \ref{lem1} together with the Markov property, we get
$$\rho=\int_1^{+\infty}\frac{dw}{w^2}\E[\frac{1}{R_w'}|R_1'=r]=\int_0^{+\infty}\frac{dt}{(1+t)^2}\E_r[\frac{1}{R'_t}],$$
where $\mathbb{P}_r$ denotes the law of a Bessel process $R'$ starting from $r$. We then use the Doob's absolute continuity relationship, that is
$$\mathbb{P}_r\big|_{\mathcal{F}_t}=\frac{X_{t\wedge T_0}}{r}W_r\big|_{\mathcal{F}_t},$$
where $W_r$ is the Wiener measure associated to a Brownian motion starting at point $r$, $X$ is the canonical process and $T_0$
is the first hitting time of $0$ by $X$, see for example \cite{revuz1999continuous}, Chapter XI. This together with the fact that
$$\frac{X_{t\wedge T_0}}{X_t}=\mathrm{1}_{T_0>t}$$
gives
$$\E_r[\frac{1}{R'_t}]=\frac{1}{r}W_r[T_0>t]=\frac{1}{r}W_0[T_r>t].$$
Therefore,
$$\rho=\frac{1}{r}\E^{W_0}\big[\int_0^{T_r}\frac{dt}{(1+t)^2}\big]=\frac{1}{r}\E^{W_0}[\frac{T_r}{1+T_r}]=r\E[\frac{1}{N^2+r^2}].$$
Using Equation \eqref{double2}, this is equal to $H(r)$. This ends the proof of the first part of Lemma \ref{lem2}. Using the expression for the density of $R_1$ given in Corollary \ref{cortheo4}, the proof of the second part readily follows remarking that
$$\E\big[\frac{U}{R_UR_1^2}\big]=\sqrt{\frac{2}{\pi}}\int_{0}^{+\infty}dr\int_{r}^{+\infty}dx\text{e}^{-x^2/2}=\sqrt{\frac{2}{\pi}}\int_{0}^{+\infty}dx x\text{e}^{-x^2/2}=\sqrt{\frac{2}{\pi}}.$$

\subsection{An enlargement of filtration approach to the centering property of $\alpha$}
We now revisit the centering property of $\alpha$ through an enlargement of filtration formula. Let $(\mathcal{F}_t)$ denote the filtration of the Brownian motion $(B_t)$ and $(\mathcal{F}'_t)$ the filtration obtained by initially enlarging $(\mathcal{F}_t)$ with $T_1$. It is shown in \cite{jeulin1979grossissement} that $(B_t)$ is a $(\mathcal{F}'_t)$ semi-martingale. More precisely,
\begin{equation}\label{dec}
B_t=\beta_t-\int_0^{t\wedge T_1}\frac{ds}{1-B_s}+\int_0^{t\wedge T_1}ds\frac{1-B_s}{T_1-s},
\end{equation}
where $(\beta_t)$ is a $(\mathcal{F}'_t)$ Brownian motion (in particular it is independent of $T_1$).
Taking expectation on both sides of \eqref{dec} at time $t=UT_1$, we get
$$\E[\alpha]=-\E\big[\frac{1}{\sqrt{T_1}}\int_0^{UT_1}\frac{ds}{1-B_s}\big]+\E\big[\frac{1}{\sqrt{T_1}}\int_0^{UT_1}ds\frac{1-B_s}{T_1-s}\big].$$
Using the change of variable $s=uT_1$ in both integrals, we get
$$\E[\alpha]=-\E\big[\sqrt{T_1}\int_0^{U}\frac{du}{1-B_{uT_1}}\big]+\E\big[\frac{1}{\sqrt{T_1}}\int_0^{U}du\frac{1-B_{uT_1}}{1-u}\big].$$
Since $U$is independent of $B$ and uniformly distributed on $[0,1]$, we get
$$\E[\alpha]=-\E\big[\sqrt{T_1}\int_0^{1}du\frac{(1-u)}{1-B_{uT_1}}\big]+\E\big[\frac{1}{\sqrt{T_1}}\int_0^{1}du(1-B_{uT_1})\big].$$
Thus,
$$2\E[\alpha]=-\E\big[\sqrt{T_1}\int_0^{1}dv\frac{v}{1-B_{T_1(1-v)}}\big]+\E\big[\frac{1}{\sqrt{T_1}}\big].$$
We now use Williams time reversal result:
$$\Big(T_1,\big(1-B_{T_1(1-v)},~v\leq 1\big)\Big)\underset{\mathcal{L}}{=}\Big(\gamma_1,\big(R_{v\gamma_1},~v\leq 1\big)\Big),$$
where
$$\gamma_1=\text{sup}\{s>0,~R_s=1\}.$$
Hence we obtain
$$2\E[\alpha]=-\E\big[\frac{V}{R_{V\gamma_1}/\sqrt{\gamma_1}}\big]+\E\big[\frac{1}{\sqrt{T_1}}\big],$$ with $V$ a uniform random variable on $[0,1]$, independent of $R$. From the absolute continuity relationship between the laws of 
$$\big(R_{v\gamma_1}/\sqrt{\gamma_1},~v\leq 1\big)$$ and $(R_{v},~v\leq 1),$ see \cite{biane1987processus}, we get
$$2\E[\alpha]=\sqrt{\frac{2}{\pi}}-\E[\frac{V}{R_VR_1^2}].$$
Hence $\E[\alpha]=0$ if and only if
$$E[\frac{V}{R_VR_1^2}]=\sqrt{\frac{2}{\pi}}.$$ From Lemma \ref{lem2}, the last equality holds. Moreover, it has been shown without the help of our previous results \cite{elie2013expectation,rosenbaum2013law}. Thus, the use of the enlargement formula of \cite{jeulin1979grossissement} provides an alternative proof of the centering property of $\alpha$.

\section{A few words of conclusion}

Together with \cite{elie2013expectation} and \cite{rosenbaum2013law}, this paper is our third work where various aspects of the law of 
\begin{equation*}
\alpha=\frac{B_{UT_1}}{\sqrt{T_1}}
\end{equation*} are investigated. 
For example, we have considered its centering property, the explicit form of its density, which may be directly deduced from Equation \eqref{law} and Equation \eqref{dens}, and its Mellin transform. In the present paper, starting from the pseudo-Brownian bridge, we obtain some results relative to the Brownian bridge, the Brownian meander and the three dimensional Bessel process. Imhof type relations between these processes allow to go from one to another.

\bibliographystyle{abbrv}
\bibliography{bibli_ry2}
\end{document}